\newlength{\minuslength}
\newtheorem{theorem}{Theorem}
\newtheorem{proposition}[theorem]{Proposition}
\newtheorem*{affineiep}{Affine Inverse Eigenvalue Problem}
\newcommand{\Sym}{\mathbb{S}}
\newcommand{\trace}[1]{\mathrm{Tr}\left(#1\right)}
\newcommand{\R}{\mathbb{R}}
\newcommand{\Z}{\mathbb{Z}}
\newcommand{\eig}{\mathcal{E}}
\newcommand{\V}[1]{\mathcal{V}_{\mathbb{R}} \left(#1\right)}
\newcommand{\SH}{\textrm{SH}}
\title{A Note on Convex Relaxations for the Inverse Eigenvalue Problem}
\author{Utkan Onur Candogan\thanks{\noindent Department of Electrical Engineering,
California Institute of Technology, Pasadena, CA 91125, USA. \texttt{utkan@caltech.edu}} \hspace{0.25in} Yong Sheng Soh\thanks{Institute of High Performance Computing, 1 Fusionopolis Way, \# 16-16 Connexis, Singapore 138632. \texttt{soh\_yong\_sheng@ihpc.a-star.edu.sg}} \hspace{0.25in} Venkat Chandrasekaran\thanks{Department of Computing and Mathematical Sciences and Department of Electrical Engineering, California Institute of Technology, Pasadena, CA 91125, USA. \texttt{venkatc@caltech.edu} \newline The authors were supported in part by National Science Foundation grants CCF-1350590 and CCF-1637598, Air Force Office of Scientific Research grant FA9550-16-1-0210, and a Sloan Research Fellowship.}}
\begin{document}
\maketitle

\begin{abstract}
The affine inverse eigenvalue problem consists of identifying a real symmetric matrix with a prescribed set of eigenvalues in an affine space.  Due to its ubiquity in applications, various instances of the problem have been widely studied in the literature.  Previous algorithmic solutions were typically nonconvex heuristics and were often developed in a case-by-case manner for specific structured affine spaces.  In this short note we describe a general family of convex relaxations for the problem by reformulating it as a question of checking feasibility of a system of polynomial equations, and then leveraging tools from the optimization literature to obtain semidefinite programming relaxations.  Our system of polynomial equations may be viewed as a matricial analog of polynomial reformulations of $0/1$ combinatorial optimization problems, for which semidefinite relaxations have been extensively investigated.  We illustrate numerically the utility of our approach in stylized examples that are drawn from various applications.

\vspace{0.025in}

\noindent \emph{Keywords}: combinatorial optimization, real algebraic geometry, Schur-Horn orbitope, semidefinite programming, sums of squares polynomials

\vspace{0.025in}

\noindent \emph{MSC}: 15A18, 15A29, 90C22

\end{abstract}

\section{Introduction} \label{sec:intro}

The affine inverse eigenvalue problem (IEP) consists of identifying a real symmetric matrix with a prescribed set of eigenvalues in an affine space.  IEPs arise in a range of applications in engineering and physical sciences, such as natural frequency identification in vibrating systems, pole placement, factor analysis, reliability testing, estimation of the Earth's conductivity, graph partitioning and nuclear and molecular spectroscopy \cite{boley1987survey, chu1998inverse, chu2005inverse}.  Further, there are many situations in which a question of interest is to solve a discrete inverse Sturm-Liouville problem \cite{hald1977discrete}, which is a special case of an affine IEP.  Due to its ubiquity, IEPs have received much attention in the literature over the past several decades. On one end of the spectrum, there have been several efforts aimed at providing necessary and sufficient conditions for the existence of a solution of a given IEP \cite{ friedland1977inverse, hershkowitz1983existence, landau1994inverse}.  For example, Landau proved that there always exists a symmetric Toeplitz matrix with a desired set of eigenvalues \cite{landau1994inverse}; however, computing such matrices in a tractable manner remains a challenge. At the other end of the spectrum, several efforts have been aimed at developing efficient procedures for numerically finding solutions to particular types of the inverse eigenvalue problems \cite{ friedland1987formulation, wang1983numerical}, including some recent approaches based on convex optimization \cite{lin2010semi, zhao2014application}. Our work differs from these approaches in two prominent ways.  First, our framework is applicable to general affine IEPs, while some of the previous convex approaches are only useful for certain structured problem instances; see Section~\ref{SectionNumericalExperiments} for the broad range of examples to which we apply our methods.  Second, we describe a family of convex relaxations for IEPs rather than just a single convex program, and our work allows for a tradeoff between computational cost and solution quality.

We begin by first reformulating the affine IEP as a question of checking the existence of a real solution to a system of polynomial equations.  Formally, an instance of an affine IEP may be stated as follows:

\begin{affineiep}
Given $(i)$ a desired spectrum $\Lambda = \{(\lambda_i, m_i)\}_{i=1}^q \subset \R \times \Z_+$ of eigenvalue-multiplicity pairs with $\sum_i m_i = n$, and $(ii)$ an affine space $\mathcal{E} = \lbrace X \in \Sym^n~:~ \trace{C_k X} = b_k, ~k = 1,\dots,\ell \rbrace$, find an element of $\mathcal{E}$ with spectrum given by $\Lambda$ or certify that such a matrix does not exist.  Here $\Sym^n$ denotes the space of $n \times n$ real symmetric matrices.
\end{affineiep}

This problem may be reformulated as checking whether the following system has a real solution:
\begin{equation}\label{Zsystem}
\mathcal{S}_{\mathrm{iep}} := \begin{cases}
& f_1 := \, \sum_{i=1}^q Z_i - I  = 0 ,\\
& f_2^{(i)} \,:= \trace{Z_i} - m_i = 0 \text{ for } i = 1,\dots, q, \\
& f_3^{(i)} \,:= Z_i^2 - Z_i = 0  \text{ for } i = 1,\dots, q,\\
& f_4^{(k)} \,:= \sum_{i=1}^{q} \lambda_i \trace{ Z_i  C_k } - b_k  = 0 \text{ for } k = 1,\dots, \ell.
\end{cases}
\end{equation}
The variables in this system are the matrices $Z_1,\dots,Z_q \in \Sym^n$.  The matrix $I$ denotes the $n \times n$ identity.  We are concerned with whether the system of polynomials $\mathcal{S}_{\mathrm{iep}} = \{f_1, f^{(1)}_2, \dots, f^{(q)}_2, f^{(1)}_3, \dots, f^{(q)}_3, f^{(1)}_4, \dots, f^{(\ell)}_4\}$ has a common zero over the reals, or in other words checking whether the associated real variety $\mathcal{V}_\R(\mathcal{S}_{\mathrm{iep}})$ is empty.  It is clear that the system of equations \eqref{Zsystem} encodes the IEP.  The equations $f_1,\{f^{(i)}_2\}_{i=1}^q,\{f^{(i)}_3\}_{i=1}^q$ specify that the $Z_i$'s are projection matrices that partition the identity, and the equations $\{f^{(k)}_4\}_{k=1}^\ell$ requires that the matrix $\sum_i \lambda_i Z_i$ belongs to $\mathcal{E}$.  As such, an affine IEP is feasible if and only if $\mathcal{V}_\R(\mathcal{S}_{\mathrm{iep}}) \neq \emptyset$.

The advantage of this polynomial reformulation is that it allows us to leverage results from the optimization literature to systematically obtain convex relaxations for the affine IEP.  Specifically, Parrilo \cite{parrilo2000structured} and Lasserre \cite{lasserre2001global} developed hierarchies of semidefinite programming relaxations for polynomial optimization problems using results from real algebraic geometry.  These relaxations entail the solution of increasingly larger convex optimization problems that search over successively more complex collections of certificates that prove the infeasibility of the system defined by $\mathcal{S}_{\mathrm{iep}}$.  From a dual perspective, these relaxations may also be viewed as providing a sequence of convex outer approximations $\mathcal{R}_1(\Lambda,\mathcal{E}) \supseteq \mathcal{R}_2(\Lambda,\mathcal{E}) \supseteq \dots \supseteq \mathrm{conv}\left(\mathcal{V}_\R(\mathcal{S}_{\mathrm{iep}})\right)$, which leads to a natural heuristic for attempting to obtain solutions of the system $\mathcal{S}_{\mathrm{iep}}$.  We describe the mechanism to obtain these relaxations in Section~\ref{SectionRelaxations}.  As an illustration, searching over a simple class of infeasibility certificates gives the following convex outer approximation to $\mathcal{V}_\R(\mathcal{S}_{\mathrm{iep}})$:
\begin{equation}\label{EquationR1}
\begin{aligned}
\mathcal{R}_{1}(\Lambda,\eig) = \left\{(Z_1,\dots,Z_q)\in \otimes^q {\Sym^{n}} ~|~ \sum_{i=1}^q Z_i = I;  ~\trace{Z_i} =m_i,~ Z_i \succeq 0 ~ \forall i; ~ \trace{\sum_{i=1}^q \lambda_i Z_i C_k} = b_k ~\forall k \right\}.
\end{aligned}
\end{equation}
In Section~\ref{SectionR1} we relate the set $\mathcal{R}_{1}(\Lambda,\eig)$ to the \emph{Schur-Horn orbitope} \cite{sanyal2011orbitopes} associated to the spectrum $\Lambda$, which is the convex hull of all real symmetric matrices with spectrum $\Lambda$.  If $\mathcal{R}_{1}(\Lambda,\eig) = \emptyset$, then it is clear that $\mathcal{V}_\R(\mathcal{S}_{\mathrm{iep}}) = \emptyset$; otherwise, $\mathcal{V}_\R(\mathcal{S}_{\mathrm{iep}})$ may or may not be empty, and one can either attempt to find an element of $\mathcal{V}_\R(\mathcal{S}_{\mathrm{iep}})$ or search over a larger family of infeasibility certificates (see Section~\ref{realag}).  In Section~\ref{SectionR2} we describe a convex outer approximation $\mathcal{R}_2(\Lambda,\mathcal{E})$ to $\mathrm{conv}\left(\mathcal{V}_\R(\mathcal{S}_{\mathrm{iep}})\right)$ that is in general tighter than $\mathcal{R}_1(\Lambda,\mathcal{E})$.

The description of $\mathcal{R}_1(\Lambda,\mathcal{E})$ in \eqref{EquationR1} consists of $q$ semidefinite constraints on matrix variables of size $n \times n$.  The description of $\mathcal{R}_2(\Lambda,\mathcal{E})$ in Section~\ref{SectionR2} involves a semidefinite constraint on a matrix variable of size ${n+1 \choose 2} q \times {n+1 \choose 2} q$.  Tighter relaxations to $\mathrm{conv}\left(\mathcal{V}_\R(\mathcal{S}_{\mathrm{iep}})\right)$ than $\mathcal{R}_1(\Lambda,\mathcal{E})$ and $\mathcal{R}_2(\Lambda,\mathcal{E})$ require even larger semidefinite descriptions, and they become prohibitively expensive to solve for large $n$.  Consequently, although we describe the general mechanism by which semidefinite relaxations of increasing size may be generated, we restrict our attention in numerical experiments to the performance of the relaxations $\mathcal{R}_1(\Lambda,\mathcal{E})$ and $\mathcal{R}_2(\Lambda,\mathcal{E})$.  As the affine IEP includes (co-)NP-hard problems as special cases, these two relaxations generally do not solve every instance of an affine IEP (as expected); nonetheless, we demonstrate their effectiveness in Section~\ref{SectionNumericalExperiments} on stylized problems such as certifying non-existence of planted subgraphs, solving discrete Sturm-Liouville equations, and computing Toeplitz matrices with a desired spectrum.

\paragraph{Connection to combinatorial optimization} A number of combinatorial problems such as computing the stability number of a graph or the knapsack problem may be formulated as checking feasibility of a system of equations in a collection of variables that take on values of $0/1$.  As many of these problems are NP-hard, a prominent approach to developing tractable approximations is to first specify the problems via polynomial equations and to then employ the methods referenced above to obtain semidefinite relaxations \cite{blekherman2012semidefinite}.  The polynomial reformulations consist of a system of equations defined by affine polynomials and quadratic equations of the form $x_i^2 - x_i = 0$ for each of the variables $x_i$ to enforce the Boolean constraints.  Our system \eqref{Zsystem} for the affine IEP may be viewed as a matricial analog of those arising in the literature on combinatorial problems, as the idempotence constraints $Z_i^2 - Z_i = 0$ represent a generalization of the scalar Boolean constraints $x_i^2 - x_i = 0$.  The present note describes promising experimental results of the performance of semidefinite relaxations for the affine IEP.  As with the significant prior body of work on combinatorial optimization, it is of interest to investigate structural properties of our relaxations for specific affine spaces $\eig$ and spectra $\Lambda$.  We outline future directions along these lines in Section~\ref{sec:conc}.

\section{Semidefinite Relaxations for Affine IEPs} \label{SectionRelaxations}

\subsection{From Polynomial Formulations to Semidefinite Relaxations} \label{realag}
We summarize here the basic aspects of obtaining semidefinite relaxations for certifying infeasibility of a polynomial systems over the reals; we refer the reader to the survey \cite{blekherman2012semidefinite} for further details.  Let $\mathbb{R}[x]$ denote the ring of polynomials with real coefficients in indeterminates $x = (x_1,\dots, x_n)$.  A \emph{polynomial ideal} $\mathcal{I}$ is a subset of $\mathbb{R}[x]$ that satisfies the following properties: $(i)$ $0 \in \mathcal{I}$, $(ii)$ $f_1,f_2 \in \mathcal{I} \Rightarrow f_1 + f_2 \in \mathcal{I}$, and $(iii)$ $f \in \mathcal{I}, h \in \mathbb{R}[x] \Rightarrow hf \in \mathcal{I}$.  The ideal \emph{generated} by a collection of polynomials $f_1, \dots, f_t \in \mathbb{R}[x]$ is the set $\langle f_1,\dots,f_t \rangle = \lbrace \sum_{i=1}^t f_i h_i : h_i \in \mathbb{R}[x]\rbrace$ -- here, $f_1,\dots,f_t$ and $h_1,\dots,h_t$ are referred to as \emph{generators} and \emph{coefficients}, respectively.  The \emph{real variety} corresponding to polynomials $g_1, \dots, g_r \in \mathbb{R}[x]$ is denoted $\V{g_1,\dots,g_r} = \lbrace x \in \mathbb{R}^n :  g_i(x) = 0, ~ i = 1,\dots,r \rbrace$.  Finally, the set of polynomials that can be expressed as a \emph{sum of squares} of polynomials is denoted $\Sigma := \lbrace p \in \mathbb{R}[x] : p = \sum_i p_i^2, ~ p_i \in \mathbb{R}[x] \rbrace$.  We state next the \emph{real Nullstellensatz} due to Krivine for certifying infeasibility of a system of a polynomial equations over $\R$:
\begin{theorem}[Real Nullstellensatz] 
Given any collection of polynomials $f_1,\dots,f_t \in \mathbb{R}[x]$, we have that:
\begin{align*}
-1 \in \Sigma +  \langle f_1,\dots,f_t \rangle  \iff \V{f_1,\dots,f_t} = \emptyset.
\end{align*}
\end{theorem}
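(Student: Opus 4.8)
The plan is to prove the two implications separately, with the forward direction being elementary and the converse carrying all the real content.

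\textbf{The direction $\Leftarrow$ via evaluation.} Suppose $-1 \in \Sigma + \langle f_1,\dots,f_t\rangle$. Then there exist $\sigma \in \Sigma$ and coefficients $h_1,\dots,h_t \in \mathbb{R}[x]$ with $-1 = \sigma + \sum_{i=1}^t h_i f_i$ as a polynomial identity. If some $a \in \mathbb{R}^n$ satisfied $f_i(a)=0$ for all $i$, then evaluating the identity at $a$ would give $-1 = \sigma(a)$, which is impossible since a sum of squares is nonnegative at every real point. Hence $\V{f_1,\dots,f_t} = \emptyset$. This half is routine; all the difficulty lies in the converse.

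\textbf{The direction $\Rightarrow$ via the contrapositive.} Assuming $-1 \notin \Sigma + \langle f_1,\dots,f_t\rangle$, I would construct an honest real common zero. The first observation is that $P := \Sigma + \langle f_1,\dots,f_t\rangle$ is a preordering of $\mathbb{R}[x]$: it is closed under addition, it contains every square, and it is closed under multiplication because a product of sums of squares is again a sum of squares while $\langle f_1,\dots,f_t\rangle$ is an ideal. The hypothesis $-1 \notin P$ says exactly that this preordering is \emph{proper}. I would then invoke Zorn's lemma to enlarge $P$ to a maximal proper preordering $P^\ast$, and apply the standard structure theorem from the algebraic theory of orderings: a maximal proper preordering of a commutative ring is an \emph{ordering}, i.e.\ $P^\ast \cup (-P^\ast) = \mathbb{R}[x]$ and its support $\mathfrak{p} := P^\ast \cap (-P^\ast)$ is a prime ideal. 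Passing to the fraction field $K$ of $\mathbb{R}[x]/\mathfrak{p}$, the induced ordering makes $K$ a formally real ordered field containing $\mathbb{R}$. Since each $f_i$ lies in $\langle f_1,\dots,f_t\rangle \subseteq P^\ast$ and $-f_i$ lies there as well, every $f_i$ belongs to $\mathfrak{p}$ and hence vanishes in $K$. Writing $\xi$ for the image of the coordinate functions $(x_1,\dots,x_n)$ and taking a real closure $R$ of $K$, we obtain a point $\xi \in R^n$ with $f_i(\xi)=0$ for all $i$: a common zero over a real closed extension field of $\mathbb{R}$.

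\textbf{Descending to $\mathbb{R}^n$.} The final and most substantive step is to pass from this abstract zero to a genuine zero in $\mathbb{R}^n$. Here I would apply the Artin--Lang homomorphism theorem, equivalently the Tarski--Seidenberg transfer principle together with the model completeness of real closed fields: the existence of a common zero of the finitely many $f_i$ is expressed by a first-order existential sentence with coefficients in $\mathbb{R}$, and since such a sentence holds in the real closed field $R \supseteq \mathbb{R}$, it holds in $\mathbb{R}$ as well. This produces a point in $\V{f_1,\dots,f_t}$, contradicting emptiness and completing the contrapositive.

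I expect the main obstacle to be precisely this last transfer together with the cone-to-ordering correspondence that precedes it: the Artin--Schreier theory of formally real fields and the Artin--Lang theorem are the genuinely nontrivial inputs from real algebraic geometry, whereas verifying that $\Sigma + \langle f_1,\dots,f_t\rangle$ is a proper preordering and running the evaluation argument are mechanical. In short, the entire weight of the proof rests on showing that the failure of a sum-of-squares infeasibility certificate forces the existence of a real point.
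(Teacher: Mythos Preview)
Your argument is correct and matches the paper's own treatment, which does not give a full proof but only remarks that the implication from $-1 \in \Sigma + \langle f_1,\dots,f_t\rangle$ to emptiness is straightforward and that the converse ``may be proved by appealing to Tarski's transfer principle''---precisely the Artin--Schreier/Artin--Lang route you outline. One cosmetic point: you have the labels $\Leftarrow$ and $\Rightarrow$ swapped (your evaluation argument is the $\Rightarrow$ direction of the displayed equivalence, and your contrapositive construction establishes $\Leftarrow$), but the content under each heading is right.
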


Here $-1 \in \mathbb{R}[x]$ refers to the constant polynomial.  The implication that $-1 \in \Sigma +  \langle f_1,\dots,f_t \rangle  \Rightarrow \V{f_1,\dots,f_t} = \emptyset$ is straightforward.  The reverse direction may be proved by appealing to Tarski's transfer principle.  In general, the best-known bounds on the size of infeasibility certificates -- i.e., the degrees of the polynomials in $\Sigma, \langle f_1,\dots,f_t \rangle$ that sum to $-1$ -- are at least triply exponential.  This is to be expected as many co-NP-hard problems can be reformulated as certifying infeasibility of a system of polynomial equations.

Obtaining \emph{tractable relaxations} based on the real Nullstellensatz relies on three key observations.  First, one fixes a subset $\tilde{\mathcal{I}} \subset \langle f_1,\dots,f_t \rangle$ by considering polynomials $\sum_i h_i f_i$ in which the coefficients $h_i \in \mathbb{R}[x]$ have bounded degree (sets of the form $\tilde{\mathcal{I}}$ are sometimes called \emph{truncated ideals}, although they are not formally ideals).  In searching for infeasibility certificates of the form $-1 = p + q, ~ p \in \Sigma, q \in \tilde{I}$, one can check that without loss of generality the search for $p$ can also be restricted to sum-of-squares polynomials of bounded degree; formally, if every element of $\tilde{I}$ has degree at most $2d$, one can restrict the search to elements of $\Sigma$ with degree at most $2d$.  Second, a decomposition $-1 = p + \sum_i h_i f_i$ where the $p$ and the $h_i's$ all have bounded degree is a linear constraint in the coefficients of $p$ and the $h_i$'s.  Finally, checking that a polynomial $p \in \mathbb{R}[x]$ in $n$ variables of degree at most $2d$ is an element of $\Sigma$ can be formulated as a semidefinite feasibility problem; letting $m_{n,d}(x)$ denote the vector of all ${n+d \choose d}$ monomials in $n$ variables of degree at most $d$, we have that:
\begin{equation*}
p \in \Sigma ~~~ \Leftrightarrow ~~~ \exists P \in \mathbb{S}^{{n+d \choose d}}, ~ P \succeq 0, ~ p(x) = m_{n,d}(x)' ~ P ~ m_{n,d}(x).
\end{equation*}
The relation $p(x) = m_{n,d}(x)' ~ P ~ m_{n,d}(x)$ is equivalent to a set of linear equations relating the entries of $P$ to the coefficients of $p$.  Thus, the search over a restricted family of infeasibility certificates via bounding the degree of the coefficients of the elements of $\langle f_1,\dots,f_t \rangle$ is a semidefinite feasibility problem.

By considering a sequence of degree-bounded subsets $\mathcal{I}' \subset \mathcal{I}'' \subset \dots \subset \langle f_1,\dots,f_t \rangle$, one can search for more complex infeasibility certificates at the expense of solving increasingly larger semidefinite programs.  Associated to this sequence of semidefinite programs is a sequence of \emph{dual optimization problems} that provide successively tighter convex outer approximations to $\mathcal{V}_\R(f_1,\dots,f_t)$ (assuming strong duality holds), i.e., $\mathcal{R}' \supseteq \mathcal{R}'' \supseteq \dots \supseteq \mathrm{conv}\left(\mathcal{V}_\R(f_1,\dots,f_t)\right)$.  This dual perspective is especially interesting for attempting to identify elements of $\mathcal{V}_\R(f_1,\dots,f_t)$.  Concretely, fix a subset $\mathcal{I}' \subset \langle f_1,\dots,f_t \rangle$, and suppose that the search for an infeasibility certificate of the form $-1 \in \Sigma + \mathcal{I}'$ is unsuccessful.  Then $\mathcal{V}_\R(f_1,\dots,f_t)$ may or may not be empty.  At this stage, one can attempt to find an element of $\mathcal{V}_\R(f_1,\dots,f_t)$ by optimizing a random linear functional over the set $\mathcal{R}'$ (obtained by considering the dual problem associated to the system $-1 \in \Sigma + \mathcal{I}'$), and checking whether the resulting optimal solution lies in $\mathcal{V}_\R(f_1,\dots,f_t)$; this heuristic is natural as $\mathcal{R}' \supseteq \mathrm{conv}\left(\mathcal{V}_\R(f_1,\dots,f_t)\right)$, and if these sets were equal then the heuristic would generically succeed at identifying an element of $\mathcal{V}_\R(f_1,\dots,f_t)$.  If this approach to finding a solution is also unsuccessful, one can consider a larger subset $\mathcal{I}'' \subset \langle f_1,\dots,f_t \rangle$ and an associated tighter approximation $\mathcal{R}'' \supseteq \mathrm{conv}\left(\mathcal{V}_\R(f_1,\dots,f_t)\right)$ (here $\mathcal{I}' \subset \mathcal{I}''$ and $\mathcal{R}' \supseteq \mathcal{R}''$), and repeat the above procedure at a greater computational expense.

In Sections~\ref{SectionR1} and \ref{SectionR2}, we employ the methodology described above to give concrete descriptions of two convex outer approximations of the variety specified by the system $\mathcal{S}_{\mathrm{iep}}$ associated to the affine IEP.

\subsection{A First Semidefinite Relaxation}
\label{SectionR1}

As our first example, we consider the following truncated ideal associated to the system $\mathcal{S}_{\mathrm{iep}}$:
\begin{equation}\label{EquationI1}
\begin{aligned}
\mathcal{I}_1 =  \Big\{ \trace{h_1 f_1  } + \sum_{i=1}^q \left[h_2^{(i)} f_2^{(i)} + \trace{h_3^{(i)} f_3^{(i)}}\right]  & + \sum_{k=1}^\ell h_4^{(k)} f_4^{(k)} : h_2^{(i)}, h_4^{(k)} \in \R, ~ h_1, h_3^{(i)} \in \Sym^n ~ \forall i, k \\ & h_1, h_2^{(i)}, h_3^{(i)}, h_4^{(k)} \text{ do not depend on } Z_i \Big\}.
\end{aligned}
\end{equation}
In words, the truncated ideal $\mathcal{I}_1 \subset \langle \mathcal{S}_{\mathrm{iep}} \rangle$ is obtained by restricting the coefficients to be constant polynomials (i.e., degree-zero polynomials).  As a result, the elements of $\mathcal{I}_1$ consist of polynomials with degree at most two.  Consequently, in searching for infeasibility certificates of the form $-1 \in \mathcal{I}_1 + \Sigma$ one need only consider quadratic polynomials in $\Sigma$, which in turn leads to checking feasibility of the following semidefinite program:
\begin{equation}
\begin{aligned}\label{AltSystemEqs}
-\trace{A} - \sum_{i=1}^q m_i d_i - \sum_{k=1}^\ell b_k \xi_k &= 1; ~~~
A + d_i I  + \lambda_i \sum_{k=1}^\ell \xi_k  C_k - B_{ii} &= 0, ~ B_{ii} \succeq 0, ~ i=1,\dots,q
\end{aligned}
\end{equation}
in variables $A \in \mathbb{S}^n$, $d_i \in \R$ and $B_{ii} \in \mathbb{S}^n$ for $i=1,\dots,q$,  and $\xi_k\in \R$ for $k=1,\dots,\ell$.  The elements of the truncated ideal $\mathcal{I}_1$ can be associated to the above problem via the relations $h_1 = -A,~ h_2^{(i)}= -d_i ,~ h_3^{(i)}= - B_{ii} , ~h_4^{(k)}= -\xi_k$, and then observing that the constraints in \eqref{AltSystemEqs} are equivalent to checking that the polynomial $\trace{h_1 f_1  } + \sum_{i=1}^q h_2^{(i)} f_2^{(i)}+ \sum_{i=1}^q \trace{h_3^{(i)} f_3^{(i)}  }  + \sum_{k=1}^\ell f_4^{(k)} h_4^{(k)}$ in variables $(Z_1,\dots,Z_q)$ can be decomposed as $-1 - \Sigma$.  Next we relate $\mathcal{R}_1(\Lambda,\eig)$ and the system $-1 \in \mathcal{I}_1 + \Sigma$ via strong duality:
\begin{proposition} \label{ThmSystAlt1}
Consider an affine IEP specified by a spectrum $\Lambda$ and an affine space $\eig \subset \mathbb{S}^n$.  Let $\mathcal{I}_1$ be defined as in \eqref{EquationI1} and $\mathcal{R}_1(\Lambda,\eig)$ as in \eqref{EquationR1}.  Then exactly one of the following two statements is true:
\begin{equation*}
(1) ~ \mathcal{R}_1(\Lambda,\eig) \text{ is nonempty}, \hspace{1in} (2) ~ -1 \in \mathcal{I}_1 + \Sigma.
\end{equation*}
\end{proposition}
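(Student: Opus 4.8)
The plan is to read the Proposition as an exact theorem of the alternative for a conic (semidefinite) feasibility problem, and to prove it by separating a point from a \emph{closed} convex cone. First I would recast statement (1) in conic form. Write $K := \otimes^q \Sym^n_+$ for the cone of tuples $(Z_1,\dots,Z_q)$ with each $Z_i \succeq 0$, and let $\mathcal{A}$ be the linear map sending $(Z_1,\dots,Z_q)$ to $\big(\sum_i Z_i,\, \trace{Z_1},\dots,\trace{Z_q},\, \sum_i \lambda_i \trace{Z_i C_1},\dots,\sum_i \lambda_i \trace{Z_i C_\ell}\big)$, with its $\Sym^n$ component carrying the trace inner product. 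Then $\mathcal{R}_1(\Lambda,\eig)$ is nonempty precisely when $\beta := (I, m_1,\dots,m_q, b_1,\dots,b_\ell)$ lies in $\mathcal{A}(K)$. A direct adjoint computation gives $\mathcal{A}^\ast(A, d_1,\dots,d_q,\xi_1,\dots,\xi_\ell)_i = A + d_i I + \lambda_i \sum_k \xi_k C_k$, and pairing $y=(A,\{d_i\},\{\xi_k\})$ with $\beta$ yields $\trace{A} + \sum_i m_i d_i + \sum_k b_k \xi_k$. Comparing with \eqref{AltSystemEqs} shows that statement (2)---which the discussion preceding the Proposition already identifies with feasibility of \eqref{AltSystemEqs}---is exactly the assertion that there exists $y$ with $\mathcal{A}^\ast(y) \in K$ (using self-duality $K^\ast = K$, i.e. each block $B_{ii}:=\mathcal A^\ast(y)_i\succeq 0$) and $\langle y, \beta\rangle = -1$.

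With this dictionary the easy direction---(1) and (2) cannot both hold---follows from a one-line pairing. If $Z \in \mathcal{R}_1(\Lambda,\eig)$ and $y$ is as in (2), then $0 \le \sum_i \trace{B_{ii} Z_i} = \langle \mathcal A^\ast(y), Z\rangle = \langle y, \mathcal A(Z)\rangle = \langle y, \beta\rangle = -1$, a contradiction; here I use $\trace{B_{ii} Z_i}\ge 0$ for positive semidefinite $B_{ii},Z_i$ and the defining equations of $\mathcal R_1(\Lambda,\eig)$ to evaluate $\mathcal A(Z)=\beta$.

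For the harder direction I would show that if $\mathcal{R}_1(\Lambda,\eig)$ is empty, i.e. $\beta \notin \mathcal A(K)$, then (2) holds. The main obstacle is that the image of a closed cone under a linear map need not be closed, so a naive separation could yield only an asymptotic certificate; the crux is therefore to prove $\mathcal A(K)$ is closed. This is exactly where the semidefinite structure enters. I claim $K \cap \ker \mathcal A = \{0\}$: any $Z \in K \cap \ker\mathcal A$ has each $Z_i \succeq 0$ with $\sum_i Z_i = 0$, and a sum of positive semidefinite matrices vanishes only if each summand vanishes. Given this, closedness follows by normalization: if $\mathcal A(Z^{(n)}) \to w$ with $Z^{(n)} \in K$ but $\norm{Z^{(n)}} \to \infty$, then the unit-norm tuples $Z^{(n)}/\norm{Z^{(n)}}$ have a subsequential limit $U \in K$ with $\norm{U}=1$ and $\mathcal A(U)=0$, contradicting $K\cap\ker\mathcal A = \{0\}$; hence $\{Z^{(n)}\}$ is bounded and $w \in \mathcal A(K)$.

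Finally, since $\mathcal A(K)$ is a closed convex cone not containing $\beta$, the separating hyperplane theorem furnishes $y$ with $\langle y, \beta\rangle < 0 \le \langle y, w\rangle$ for all $w \in \mathcal A(K)$; the cone property (letting scalars tend to $\infty$) forces the separating value to be $0$, so $\langle \mathcal A^\ast(y), Z\rangle = \langle y,\mathcal A(Z)\rangle \ge 0$ for all $Z \in K$, i.e. $\mathcal A^\ast(y) \in K^\ast = K$, and rescaling $y$ makes $\langle y, \beta\rangle = -1$. By the dictionary above this is precisely statement (2), so exactly one of (1) and (2) holds. I expect the only delicate point to be the closedness of $\mathcal A(K)$; everything else is bookkeeping together with the standard separation theorem.
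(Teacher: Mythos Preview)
Your proof is correct. Both you and the paper identify statement~(2) with feasibility of the system~\eqref{AltSystemEqs} and then establish an exact alternative with $\mathcal{R}_1(\Lambda,\eig)$, but by different mechanisms. The paper invokes conic strong duality via Slater's condition---observing, in effect, that the certificate side admits a strict interior point (for instance $A=0$, $\xi_k=0$, $d_i=1$ makes each block $B_{ii}=I\succ 0$)---and leaves the rest to the standard strong-duality theorem. You instead prove by hand that the image cone $\mathcal{A}(K)$ is closed, using the pleasant fact that $K\cap\ker\mathcal{A}=\{0\}$ (a sum of positive semidefinite matrices vanishes only termwise), and then separate $\beta$ from $\mathcal{A}(K)$ directly. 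Your argument is more self-contained and makes transparent exactly where \emph{exactness} (as opposed to a merely weak or asymptotic alternative) comes from; the paper's argument is terser once one is willing to cite Slater and strong duality as a black box. Either regularity condition---the Slater point on the certificate side, or closedness of $\mathcal{A}(K)$ on the primal side---suffices, and both happen to be available in this problem.
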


\begin{proof}
The feasibility of the system \eqref{AltSystemEqs} is equivalent to the condition $-1 \in \mathcal{I}_1 + \Sigma$.  One can check that the system \eqref{AltSystemEqs} and the constraints describing $\mathcal{R}_1(\Lambda,\eig)$ are \emph{strong alternatives} of each other, which follows from an application of conic duality -- strong duality follows from Slater's condition being satisfied.
\end{proof}

As a consequence of this result, it follows that $\mathcal{R}_1(\Lambda,\eig)$ is a convex outer approximation of the variety $\mathcal{V}_\R(\mathcal{S}_{\mathrm{iep}})$.  A more direct way to see this is to consider any element $(Z_1,\dots,Z_q) \in \mathcal{V}_\R(\mathcal{S}_{\mathrm{iep}})$ and to note that the idempotence constraints $Z_i^2 - Z_i = 0$ in $\mathcal{S}_{\mathrm{iep}}$ imply the semidefinite constraints $Z_i \succeq 0$ in $\mathcal{R}_1(\Lambda,\eig)$.

The set $\mathcal{R}_1(\Lambda,\eig)$ is closely related to the Schur-Horn orbitope associated to the spectrum $\Lambda$ \cite{sanyal2011orbitopes}:
\begin{equation}\label{Schur-Horn}
\begin{aligned}
\SH(\Lambda) &= \mathrm{conv}\{M \in \Sym^n ~|~ \lambda(M) = \Lambda\} \\ &= \left\{X \in \Sym^n ~|~ \exists (Z_1,\dots,Z_q) \in \otimes^q {\Sym^{n}} \text{ s.t. } \sum_{i=1}^q Z_i = I;  ~\trace{Z_i} =m_i,~ Z_i \succeq 0 ~ \forall i; ~ X = \sum_{i=1}^q \lambda_i Z_i \right\}. 
\end{aligned}
\end{equation}
The second equality follows from the characterization in \cite{ding2009low}.  The Schur-Horn orbitope was so-named by the authors of \cite{sanyal2011orbitopes} due to its connection with the Schur-Horn theorem.  A subset of the authors of the present note employed the Schur-Horn orbitope in developing efficient convex relaxations for NP-hard combinatorial optimization problems such as finding planted subgraphs \cite{candogan2018finding} and computing edit distances between pairs of graphs \cite{candogan2019convex}.  In the context of the present note, the Schur-Horn orbitope provides a precise characterization of the conditions under which $-1 \in \mathcal{I}_1 + \Sigma$ is successful.  Specifically, from Proposition~\ref{ThmSystAlt1} and \eqref{Schur-Horn} we have that:
\begin{equation}
-1 \in \mathcal{I}_1 + \Sigma \quad \Leftrightarrow \quad \mathcal{R}_1(\Lambda,\eig) = \emptyset \quad \Leftrightarrow \quad \SH(\Lambda) \cap \eig = \emptyset. \label{schurhornsuccess}
\end{equation}
Hence, if $-1 \notin \mathcal{I}_1 + \Sigma$, we have that $\mathcal{R}_1(\Lambda,\eig) \neq \emptyset$.  In particular, the variety $\mathcal{V}_\R(\mathcal{S}_{\mathrm{iep}})$ may or may not be empty.  At this stage, as discussed in Section~\ref{realag} one can maximize a random linear functional over the set $\mathcal{R}_1(\Lambda,\eig)$; the resulting optimal solution $(\hat{Z}_1,\dots,\hat{Z}_q)$ is generically an extreme point of $\mathcal{R}_1(\Lambda,\eig)$, and one can check if $(\hat{Z}_1,\dots,\hat{Z}_q)$ satisfies the equations in the system $\mathcal{S}_{\mathrm{iep}}$.  If this attempt at finding a feasible point in $\mathcal{V}_\R(\mathcal{S}_{\mathrm{iep}})$ is unsuccessful, one can repeat the preceding steps at attempting to certify infeasibility or to find a feasible point in $\mathcal{V}_\R(\mathcal{S}_{\mathrm{iep}})$ via a larger semidefinite program, which we describe in the next subsection.

We present here a result on guaranteed recovery of a solution to an affine IEP when the affine space is a random subspace:
\begin{proposition}
Let $X^\star \in \mathbb{S}^n$ have a spectrum $\Lambda$ with $n$ distinct eigenvalues, and suppose $\mathcal{E} = \{X ~|~ \mathrm{Tr}(C_k X) = \mathrm{Tr}(C_k X^\star), ~ k=1,\dots,\ell\}$ is an affine space with the $C_k \in \mathbb{S}^n$ being random matrices with i.i.d. standard Gaussian entries.  If $\ell > {n+1 \choose 2} - H_n$ where $H_n = \sum_{j=1}^n \tfrac{1}{n}$ is the $n$'th harmonic number, then with high probability the unique element of $\mathcal{R}_1(\Lambda,\eig)$ is the set of $n$ projection maps onto the eigenspaces of $X^\star$.
\end{proposition}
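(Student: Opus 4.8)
The plan is to recast the claim as a question about a random affine section of the Schur-Horn orbitope $\SH(\Lambda)$ and then to invoke conic integral geometry. Since $X^\star$ has $n$ distinct eigenvalues we have $q=n$ and every $m_i=1$, and $\mathcal{R}_1(\Lambda,\eig)$ is exactly the set of tuples $(Z_1,\dots,Z_n)$ with $\sum_i Z_i = I$, $\trace{Z_i}=1$, $Z_i\succeq 0$, whose image under the linear map $\Phi(Z_1,\dots,Z_n):=\sum_i\lambda_i Z_i$ lies in $\eig$; note that the eigenprojection tuple $Z^\star=(Z_1^\star,\dots,Z_n^\star)$ is always feasible since $X^\star=\Phi(Z^\star)\in\eig$, so $\mathcal{R}_1(\Lambda,\eig)\neq\emptyset$. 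My first step is to show that the fiber of $\Phi$ over $X^\star$ meets the feasible set in the single point $Z^\star$. Working in the eigenbasis of $X^\star$ (so $X^\star=\diag{\lambda_1,\dots,\lambda_n}$ with, say, $\lambda_1>\cdots>\lambda_n$), the diagonal entries $D_{ia}:=(Z_i)_{aa}$ form a doubly stochastic matrix, and the constraint $\sum_i\lambda_i Z_i=X^\star$ reads $D^\top\lambda=\lambda$; since the $\lambda_i$ are distinct this forces $D=I$ (via the equality case of $\norm{D^\top\lambda}_2\le\norm{\lambda}_2$), whence $(Z_i)_{aa}=\delta_{ia}$, and positive semidefiniteness with vanishing diagonal entries then forces $Z_i=Z_i^\star$. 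Consequently it suffices to prove that $\eig\cap\SH(\Lambda)=\{X^\star\}$ with high probability.

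For the second step, write $\eig=X^\star+L$ with $L=\{Y\in\Sym^n:\trace{C_k Y}=0,\ k=1,\dots,\ell\}$, a random subspace of codimension $\ell$. A standard fact from the convex geometry of linear inverse problems is that, for the compact convex body $\SH(\Lambda)$, one has $\eig\cap\SH(\Lambda)=\{X^\star\}$ if and only if $L$ meets the tangent cone $T=\overline{\mathrm{cone}}\big(\SH(\Lambda)-X^\star\big)$ only at the origin. I would then appeal to Gordon's escape-through-the-mesh theorem (equivalently the approximate kinematic formula of Amelunxen--Lotz--McCoy--Tropp): a suitably uniformly random subspace of codimension $\ell$ misses a closed convex cone $T$ (trivial intersection) with high probability provided $\ell>\delta(T)$, where $\delta(T)=\mathbb{E}\,\norm{\Pi_T(g)}_2^2$ is the statistical dimension and $g$ is a standard Gaussian. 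It therefore remains to establish that $\delta(T)=\binom{n+1}{2}-H_n$.

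By conic duality $\delta(T)+\delta(T^\circ)=\binom{n+1}{2}$, where $T^\circ$ is the normal cone of $\SH(\Lambda)$ at $X^\star$, so the task reduces to showing $\delta(T^\circ)=H_n$. I would identify $T^\circ$ explicitly: $W\in T^\circ$ precisely when $X^\star$ maximizes $\langle W,\cdot\rangle$ over $\SH(\Lambda)$, and by von Neumann's trace inequality together with the Schur--Horn theorem (equality case of majorization of the diagonal by the spectrum) this holds iff $W$ commutes with $X^\star$ with eigenvalues ordered as those of $X^\star$; since the $\lambda_i$ are distinct, $T^\circ$ is, in the eigenbasis of $X^\star$, the monotone cone $\{\diag{w_1,\dots,w_n}:w_1\ge\cdots\ge w_n\}$ lying inside the subspace of diagonal matrices. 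The statistical dimension of the monotone cone equals the expected number of level sets (blocks) produced by isotonic regression of an i.i.d.\ Gaussian vector, which by the Sparre Andersen theory of convex minorants of random walks---equivalently the expected cycle count of a uniform random permutation, or a Stirling-number intrinsic-volume computation (Meyer--Woodroofe)---equals $H_n=\sum_{j=1}^n 1/j$. Finally, although the symmetric Gaussian ensemble $\{C_k\}$ is not isotropic with respect to the trace inner product, its covariance operator restricts to the identity on the subspace of diagonal matrices (off-diagonal coordinates merely carry a larger variance); since $T^\circ$ is contained in that subspace, the requisite whitening acts trivially on $T^\circ$ and leaves the threshold unchanged. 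Combining, $\delta(T)=\binom{n+1}{2}-H_n$, and the hypothesis $\ell>\binom{n+1}{2}-H_n$ yields $\eig\cap\SH(\Lambda)=\{X^\star\}$ with high probability, which by the first step gives $\mathcal{R}_1(\Lambda,\eig)=\{Z^\star\}$.

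The main obstacle is the pair of geometric computations in the last step. Identifying $T^\circ$ as the monotone cone is routine, but pinning down the value $H_n$ of its statistical dimension is the crux and genuinely requires the isotonic-regression/convex-minorant argument (or the equivalent intrinsic-volume identity). The second delicate point is the non-isotropy of the ensemble: to obtain the \emph{sharp} constant $\binom{n+1}{2}-H_n$ rather than a threshold off by a factor, one must verify carefully that the relevant cone lives in the block on which the covariance is the identity. A secondary issue is making the ``high probability'' claim quantitative, by tracking the deviation term in the kinematic formula and checking that the strict inequality in the hypothesis leaves the needed slack.
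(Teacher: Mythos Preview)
Your argument follows the same route as the paper's: reduce to showing $\SH(\Lambda)\cap\eig=\{X^\star\}$, identify the normal cone at $X^\star$ as the monotone cone of sorted diagonal matrices, and invoke the Gaussian kinematic/escape threshold $\delta(T)=\binom{n+1}{2}-H_n$ from \cite{almt2014edge,crpw2012atomic,candogan2018finding}. You are in fact more careful than the paper on two points it glosses over---the injectivity of $(Z_1,\dots,Z_n)\mapsto\sum_i\lambda_i Z_i$ on the feasible set (your doubly-stochastic fiber argument), and the non-isotropy of the symmetric i.i.d.\ Gaussian ensemble (which is harmless precisely because $\Sigma^{1/2}T=T$, as $T$ contains the entire off-diagonal subspace).
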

\begin{proof}
As Gaussian random matrices constitute an orthogonally invariant ensemble, we can assume without loss of generality that $X^\star$ is a diagonal matrix with the eigenvalues in descending order on the diagonal.  From \eqref{EquationR1}, \eqref{Schur-Horn}, and \eqref{schurhornsuccess}, we need to ensure that $\SH(\Lambda) \cap \eig = \{X^\star\}$.  From the results in \cite{almt2014edge,crpw2012atomic}, we have that if $\eta$ is the expected value of the square of the Euclidean distance of a Gaussian random matrix to the normal cone at $X^\star$ with respect to $\SH(\Lambda)$, then $\SH(\Lambda) \cap \eig = \{X^\star\}$ with high probability provided $\ell > \eta$.  From \cite{candogan2018finding} we have that the normal cone at $X^\star$ with respect to $\SH(\Lambda)$ is the set of diagonal matrices with the diagonal entries sorted in descending order.  From \cite{almt2014edge} we have that the expected squared Euclidean distance of a random Gaussian matrix to such a cone of sorted entries is equal to ${n+1 \choose 2} - H_n$.
\end{proof}

\subsection{A Tighter Semidefinite Relaxation} \label{SectionR2}


Next we consider a larger truncated ideal $\mathcal{I}_2 \subset \langle \mathcal{S}_{\mathrm{iep}} \rangle$ with larger degree coefficients than in $\mathcal{I}_1$:
\begin{equation} \label{eq:I2}
\begin{aligned}
\mathcal{I}_2 =  \Big\{ &\trace{h_1\left( Z_1,\dots,Z_q \right) f_1  } + \left[\sum_{i=1}^q h_2^{(i)}\left( Z_1,\dots,Z_q \right) f_2^{(i)}+  \trace{h_3^{(i)} f_3^{(i)}} \right]  + \sum_{k=1}^l f_4^{(k)} h_4^{(k)}\left( Z_1,\dots,Z_q \right) : \\
& h_1\left( Z_1,\dots,Z_q \right), h_3^{(i)} \in \Sym^n, ~ h_2^{(i)} \left( Z_1,\dots,Z_q \right), h_4^{(k)} \left( Z_1,\dots,Z_q \right) \in \R, ~ \forall i, k \\ & h_1\left( Z_1,\dots,Z_q \right), h_2^{(i)} \left( Z_1,\dots,Z_q \right), h_4^{(k)} \left( Z_1,\dots,Z_q \right) \text{ affine in } Z_i, ~ h_3^{(i)} \text{ does not depend on } Z_i \Big\}.
\end{aligned}
\end{equation}
The coefficients $h_3^{(i)}$ are constrained in the same way as in
$\mathcal{I}_1$ but the other coefficients $h_1, h_2^{(i)}, h_4^{(k)}$ are allowed to be affine polynomials (in the case of $h_1$, more precisely a matrix of affine polynomials).  The resulting collection $\mathcal{I}_2$ consists of polynomials of degree at most two, and therefore we can restrict our attention to elements of $\Sigma$ of degree at most two in searching for infeasibility certificates of the form $-1 \in \mathcal{I}_2 + \Sigma$.  However, $\mathcal{I}_2$ is in general larger than $\mathcal{I}_1$ so that $\mathcal{I}_2 + \Sigma$ offers a richer family of infeasibility certificates than $\mathcal{I}_1 + \Sigma$.  The convex relaxation $\mathcal{R}_2(\Lambda,\eig)$ obtained as an alternative to the system $-1 \in \mathcal{I}_2 + \Sigma$ in turn provides a tighter approximation in general than $\mathcal{R}_1(\Lambda,\eig)$ to the convex hull $\mathrm{conv}(\mathcal{V}_\R(\mathcal{S}_{\mathrm{iep}}))$.  We require some notation to give a precise description of $\mathcal{R}_2(\Lambda,\eig)$.  Let $\delta_{k,l}$ denote the usual delta function which equals one if the arguments are equal and zero otherwise. Additionally, for $s,t = 1,\dots,n$ let
\begin{align*}
f_{s,t} = \begin{cases} e_s e_t^T, \quad \quad \quad \quad ~~ \text{ if }  s=t,\\ \frac{1}{2}( e_se_t^T + e_t e_s^T), \text{ otherwise.}  \end{cases}
\end{align*}
Here, $e_s,e_t \in \mathbb{R}^n$ are the $s$'th and $t$'th standard basis vectors in $\R^n$.  The set $\mathcal{R}_2(\Lambda,\eig)$ is then specified as:
\begin{equation} \label{EqSecondDegree}
\begin{aligned}
\mathcal{R}_2(\Lambda,\eig) =\Big \lbrace   & (Z_1,\dots,Z_q)\in \otimes^q {\Sym^{n}} ~|~  \exists \, \mathcal{W}_{i,j} : \mathbb{S}^n \rightarrow \mathbb{S}^n, i,j = 1,\dots,q, ~ \exists \, \mathfrak{W}:  \otimes^q {\Sym^{n}} \rightarrow \otimes^q {\Sym^{n}},\\
& \mathfrak{W} \succeq 0, ~ [\mathfrak{W}(X_1,\dots,X_q)]_{i} = \sum_{j=1}^q \mathcal{W}_{i,j}(X_j) ~  i = 1,\dots,q, ~Z_i \succeq 0 ~  i = 1,\dots,q\\
& \sum_{i=1}^{q} Z_i = I,~ \trace{Z_i} = m_i, ~ i = 1,\dots,q,~ \sum_{i=1}^q \lambda_i \trace{ Z_i  C_k } = b_k, k = 1,\dots,l,\\
& \sum_{j=1}^q \mathcal{W}_{i,j}(f_{s,t}) = \delta_{s,t} Z_i, ~ s,t = 1,\dots,n, ~ i = 1,\dots,q, \\
& \sum_{s=1}^n \mathcal{W}_{i,j}(f_{s,s}) =  m_j Z_i, ~ i,j  = 1,\dots,q, \\
& \sum_{r=1}^n \trace{  f_{s,r} \mathcal{W}_{i,i}(f_{t,r})   } = (Z_i)_{s,t}, ~ i = 1,\dots,q,~ s,t = 1,\dots,n,\\
& \sum_{j=1}^q \lambda_j \mathcal{W}_{i,j}(C_k) = b_k Z_i, ~ i = 1,\dots,q, ~ k = 1,\dots,\ell~
      \Big\rbrace.
\end{aligned}
\end{equation}
Our next result records the fact that $\mathcal{R}_2(\Lambda,\eig)$ does constitute a strong alternative for $-1 \in \mathcal{I}_2 +\Sigma$.
\begin{proposition} \label{ThmSystAlt2}
Consider an affine IEP specified by a spectrum $\Lambda$ and an affine space $\eig \subset \mathbb{S}^n$.  Let $\mathcal{I}_2$ be defined as in \eqref{eq:I2} and $\mathcal{R}_2(\Lambda,\eig)$ as in \eqref{EqSecondDegree}.  Then exactly one of the following two statements is true:
\begin{equation*}
(1) ~ \mathcal{R}_2(\Lambda,\eig) \text{ is nonempty}, \hspace{1in} (2) ~ -1 \in \mathcal{I}_2 +\Sigma.
\end{equation*}
\end{proposition}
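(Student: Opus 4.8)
The plan is to follow the same conic-duality template used to establish Proposition~\ref{ThmSystAlt1}, now applied to the richer truncated ideal $\mathcal{I}_2$. First I would make the search for an infeasibility certificate $-1 \in \mathcal{I}_2 + \Sigma$ completely explicit as a semidefinite feasibility problem. Since every element of $\mathcal{I}_2$ has degree at most two in the entries of $(Z_1,\dots,Z_q)$, one may restrict the sum-of-squares multiplier to a degree-$\le 2$ element of $\Sigma$, i.e.\ write it as $m(Z)^\top P\, m(Z)$ with $P \succeq 0$, where $m(Z)$ is the vector consisting of the constant $1$ together with all entries $(Z_i)_{s,t}$, $1\le s\le t\le n$, $i=1,\dots,q$; thus $P$ is a symmetric matrix of size $1 + q\binom{n+1}{2}$. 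The identity $-1 = \trace{h_1 f_1} + \sum_i[h_2^{(i)} f_2^{(i)} + \trace{h_3^{(i)} f_3^{(i)}}] + \sum_k f_4^{(k)} h_4^{(k)} + m(Z)^\top P\, m(Z)$, with the coefficients $h_1,h_2^{(i)},h_4^{(k)}$ affine and $h_3^{(i)}$ constant as prescribed in \eqref{eq:I2}, becomes a finite system of linear equations once I equate the coefficients of each monomial of degree $0,1,2$ in the entries of the $Z_i$; together with $P\succeq 0$ this is a semidefinite feasibility problem, which I will call $(\mathrm{P})$.

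Next I would form the conic dual of $(\mathrm{P})$. The equality constraints of $(\mathrm{P})$ are indexed by monomials of degree at most two, so the dual variables are naturally organized as a truncated matricial moment sequence: a scalar dual to the degree-$0$ equation, a degree-$1$ block that assembles into symmetric matrices $Z_1,\dots,Z_q$, and a degree-$2$ block. The constraint $P\succeq 0$ is self-dual, so its multiplier is a positive semidefinite operator; this is exactly the object $\mathfrak{W}\succeq 0$ of size $\binom{n+1}{2}q\times \binom{n+1}{2}q$ appearing in \eqref{EqSecondDegree}, and reading off its block entries gives the component maps $\mathcal{W}_{i,j}$ through the relation $[\mathfrak{W}(X_1,\dots,X_q)]_i = \sum_j \mathcal{W}_{i,j}(X_j)$. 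The $Z_i$ recovered from the degree-$1$ dual block are precisely the linear parts of $\mathfrak{W}$, and $Z_i\succeq 0$ follows because it is a principal submatrix of the PSD operator $\mathfrak{W}$ (the block pairing the constant monomial with the degree-$1$ entries of $Z_i$).

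The heart of the argument is then a bookkeeping step: each family of coefficient-matching equations in $(\mathrm{P})$ is dual to one of the localizing constraints listed in \eqref{EqSecondDegree}, and I would verify the correspondence generator by generator. The constant parts of the coefficients reproduce the affine constraints $\sum_i Z_i = I$, $\trace{Z_i}=m_i$, and $\sum_j \lambda_j \trace{Z_j C_k} = b_k$ already present in $\mathcal{R}_1(\Lambda,\eig)$. The affine part of $h_1$ localizing $f_1 = \sum_i Z_i - I$ yields $\sum_j \mathcal{W}_{i,j}(f_{s,t}) = \delta_{s,t} Z_i$; the affine part of $h_2^{(j)}$ localizing $f_2^{(j)} = \trace{Z_j}-m_j$ yields $\sum_s \mathcal{W}_{i,j}(f_{s,s}) = m_j Z_i$; the constant matrix coefficient $h_3^{(i)}$ localizing the idempotence generator $f_3^{(i)} = Z_i^2 - Z_i$ yields the quadratic identity $\sum_r \trace{f_{s,r}\mathcal{W}_{i,i}(f_{t,r})} = (Z_i)_{s,t}$; and the affine part of $h_4^{(k)}$ localizing $f_4^{(k)}$ yields $\sum_j \lambda_j \mathcal{W}_{i,j}(C_k) = b_k Z_i$. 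Assembling these shows that the dual feasible set of $(\mathrm{P})$ is exactly $\mathcal{R}_2(\Lambda,\eig)$.

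Finally, to conclude that $(\mathrm{P})$ and the condition $\mathcal{R}_2(\Lambda,\eig)\ne\emptyset$ are \emph{strong} alternatives --- so that exactly one of the two statements holds --- I would invoke conic duality together with a Slater-type constraint qualification, exactly as in Proposition~\ref{ThmSystAlt1}. Weak alternation, namely that at most one of the two systems is feasible, is automatic; the content is ruling out the case where both are infeasible, which follows from strict feasibility of the relevant cone program and hence the absence of a duality gap. The main obstacle I anticipate is the degree-$2$ bookkeeping --- in particular tracking how the quadratic monomials $Z_i^2$ produced by the idempotence generator pair against the Gram matrix $P$, and checking that the resulting dual block consolidates precisely into the single operator $\mathfrak{W}$ with the stated relation to the $\mathcal{W}_{i,j}$ rather than into a larger, less structured object; verifying the constraint qualification for the specific cone appearing here is a secondary but necessary check.
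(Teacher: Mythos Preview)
Your proposal is correct and follows essentially the same approach as the paper, which simply states that the proof is identical to that of Proposition~\ref{ThmSystAlt1} via conic duality and Slater's condition; you have in fact supplied considerably more of the bookkeeping than the paper itself does. One small wrinkle: the constraint $Z_i\succeq 0$ in $\mathcal{R}_2(\Lambda,\eig)$ is listed separately from $\mathfrak{W}\succeq 0$ and arises (as in $\mathcal{R}_1$) from the constant coefficient $h_3^{(i)}$ on the idempotence generator rather than as a principal submatrix of $\mathfrak{W}$, so you should trace that constraint through the same channel as in Proposition~\ref{ThmSystAlt1}.
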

\begin{proof}
The proof is identical to that of Proposition~\ref{ThmSystAlt1}, and it follows from an application of conic duality.
\end{proof}
It is clear that $\mathcal{R}_1(\Lambda,\eig) \supseteq \mathcal{R}_2(\Lambda,\eig)$ as the constraints defining $\mathcal{R}_2(\Lambda,\eig)$ are a superset of those defining $\mathcal{R}_1(\Lambda,\eig)$. Further, for any $(Z_1,\dots,Z_q) \in \mathcal{V}_\R(\mathcal{S}_{\mathrm{iep}})$, one can check that the constraints defining $\mathcal{R}_2(\Lambda,\eig)$ are satisfied by setting the linear operators $\mathcal{W}_{i,j}(X) = \trace{ Z_j X} Z_i~ \forall X \in \Sym^n$.  Thus, there are additional \emph{quadratic relations} among the $Z_i$'s that are satisfied by the elements of $\mathcal{V}_\R(\mathcal{S}_{\mathrm{iep}})$ and are implied by $\mathcal{R}_2(\Lambda,\eig)$, but are not captured by the set $\mathcal{R}_1(\Lambda,\eig)$.  This is the source of the improvement of the relaxation $\mathcal{R}_2(\Lambda,\eig)$ compared to $\mathcal{R}_1(\Lambda,\eig)$, although the improvement comes at the expense of solving a substantially larger semidefinite program.  In particular, $\mathcal{R}_1(\Lambda,\eig)$ entails checking $q$ semidefinite constraints on $n \times n$ real symmetric matrices, while the description of $\mathcal{R}_2(\Lambda,\eig)$ involves a semidefinite constraint on the operator $\mathfrak{W}:  \otimes^q {\Sym^{n}} \rightarrow \otimes^q {\Sym^{n}}$, which is equivalent to stipulating that a ${n+1 \choose 2} q \times {n+1 \choose 2} q$ real symmetric matrix is positive semidefinite.  Thus, optimizing over $\mathcal{R}_2(\Lambda,\eig)$ is much more computationally expensive than $\mathcal{R}_1(\Lambda,\eig)$.

\if 0

We describe next the set $\mathcal{R}_2(\Lambda,eig)$We describe next the as we describe next, the offers  In other words, we have that the  and the resulting family of infeasibility certificates  is a strict superset of $\mathcal{I}_{1}$ due to higher degree coefficients. Consequently, the system of alternative corresponding to $\mathcal{I}_{2}$ (which we denote by $\mathcal{R}_2(\Lambda,\eig)$) is a better convex outer-approximation to the variety $\V{\mathcal{S}_\mathrm{iep}}$ compared to $\mathcal{R}_1(\Lambda,\eig)$. Due to its formidable length, we postpone presenting a complete definition of the set $\mathcal{R}_2(\Lambda,\eig)$ to Section \ref{SectionR2} equation (\ref{EqSecondDegree}), where we demonstrate that on top of the defining constraints of $\mathcal{R}_1(\Lambda,\eig)$, the set $\mathcal{R}_2(\Lambda,\eig)$ stipulates additional \textit{quadratic} relations which are satisfied by any matrix in $\eig$ with spectrum $\Lambda$, but not by every matrix in $\mathcal{R}_1(\Lambda,\eig)$ in general. As a result, $\V{\mathcal{S}_\mathrm{iep}} \subset \mathcal{R}_2(\Lambda,\eig) \subset \mathcal{R}_1(\Lambda,\eig)$. Nevertheless, the enhanced descriptive power of $\mathcal{R}_2(\Lambda,\eig)$ incurs a higher computational cost, as its description entails solving an SDP with a positive semidefinite constraint on a $\binom{n+1}{2}q \times \binom{n+1}{2}q$ matrix variable.

larger degree , and it has a slightly more elaborate description. Specifically, it is obtained by multiplying the third set of equations ($Z_i^2-Z_i$ for $1\leq i\leq q$) in the system $\mathcal{S}_\mathrm{iep}$ with constant matrix coefficients as before; but allowing for matrix valued coefficients whose entries are affine polynomials in the entries of variables $Z_i$, $i\in \{1,\dots,q\}$ for the first equation $(\sum_{i=1}^q Z_i - I)$ and scalar coefficients that are affine polynomials in the entries of $Z_i$, $i\in\{1,\dots,q\}$ for the the second and fourth set of equations (   $\trace{Z_i} - m_i$ and $\trace{ \sum_{i=1}^q  \lambda_i Z_i C_k} - b_k$ for $i\in \{1,\dots,q\}$, $k\in \{1,\dots,l\}$ ):



In many inverse eigenvalue problem instances where the underlying problem is infeasible, the corresponding convex relaxations $\mathcal{R}_1(\Lambda,\eig)$ or $\mathcal{R}_2(\Lambda,\eig)$ turn out to be infeasible as well. In such instances, our framework establishes a computationally efficient method for \textit{proving} the infeasibility of the underlying inverse eigenvalue problem. On the contrary, if the underlying inverse eigenvalue problem is feasible, we propose maximizing a random functional over the convex sets $\mathcal{R}_1(\Lambda,\eig)$ or $\mathcal{R}_2(\Lambda,\eig)$ in an attempt to produce a matrix with a desired spectrum $\lambda$ in the given affine subspace $\eig$. This heuristic frequently succeeds in producing a matrix in $\eig$ with spectrum $\lambda$, as such matrices coincide with some of the extreme points of the sets $\mathcal{R}_1(\Lambda,\eig)$ or $\mathcal{R}_2(\Lambda,\eig)$. Later, in Section \ref{SectionNumericalExperiments}, we demonstrate the utility of our framework in certifying infeasibility of, or producing solutions for the subspace constrained inverse eigenvalue problem via various numerical experiments.

In this subsection we investigate the convex outer-approximation $\mathcal{R}_2(\Lambda,\eig)$ of the variety $\V{\mathcal{S}_\mathrm{iep}}$. We show in Theorem \ref{ThmSystAlt2} that the set $\mathcal{R}_2(\Lambda,\eig)$ is empty if and only if one can construct the certificate of infeasibility $-1$ within the set $\mathcal{I}_2 + \Sigma$. Since the set $\mathcal{I}_2 + \Sigma$ is richer than the set $\mathcal{I}_1 + \Sigma$, the convex outer-approximation $\mathcal{R}_2(\Lambda,\eig)$ constitutes a superior convex outer-approximation of $\V{\mathcal{S}_mathrm{iep}}$ than $\mathcal{R}_1(\Lambda,\eig)$.

\fi

\section{Numerical Illustrations} \label{SectionNumericalExperiments}

Here we present experiments illustrating the performance of the relaxations $\mathcal{R}_1(\Lambda,\eig), \mathcal{R}_2(\Lambda,\eig)$ on random problem instances and stylized instances arising in applications.  Our results are obtained using the CVX parser \cite{grant2008graph} and the SDPT3 solver \cite{toh1999sdpt3}.  Before describing these, we present an approach to strengthen the relaxation $\mathcal{R}_2(\Lambda,\eig)$ by adding valid constraints without increasing the size of the semidefinite inequality.


\subsection{Strengthening the Relaxations}
A prominent approach in the optimization literature for obtaining improved bounds on hard nonconvex problems is to add redundant constraints.  The procedure presented in Section~\ref{realag} of considering a sequence of truncated ideals $\mathcal{I}_1 \subseteq \mathcal{I}_2 \subseteq \dots \subseteq \langle \mathcal{S}_{\mathrm{iep}} \rangle$ is a systematic method to add valid constraints; in particular, the elements of $\mathcal{I}_1$ and $\mathcal{I}_2$ represent polynomials that vanish at all the points in $\mathcal{V}_\R(\mathcal{S}_{\mathrm{iep}})$.  As $\mathcal{I}_1 \subseteq \mathcal{I}_2$, the relaxation $\mathcal{R}_2(\Lambda,\eig)$ offers (in general) a tighter convex outer approximation of $\mathcal{V}_\R(\mathcal{S}_{\mathrm{iep}})$ than $\mathcal{R}_1(\Lambda,\eig)$ as $\mathcal{R}_2(\Lambda,\eig)$ is derived from the incorporation of a larger collection of redundant constraints.

Here we present a simple alternative approach to adding redundant constraints for the affine IEP by augmenting the original system $\mathcal{S}_{\mathrm{iep}}$ with additional polynomials that vanish on $\mathcal{V}_\R(\mathcal{S}_{\mathrm{iep}})$, and which are not contained in the truncated ideals $\mathcal{I}_1, \mathcal{I}_2$.  Specifically, we consider the following modified system of equations:
\begin{equation} \label{iep+}
\mathcal{S}^+_{\mathrm{iep}} = \mathcal{S}_{\mathrm{iep}} \cup \{Z_i Z_j, ~ i,j=1,\dots,q, ~ i \neq j\}.
\end{equation}
The matrix equations $Z_i Z_j = 0$ are satisfied for $i \neq j$ by every solution of $\mathcal{S}_{\mathrm{iep}}$ as a consequence of the vanishing of $f_1, f_2^{(i)}, f_3^{(i)}$.  However, despite being of low degree, the matrix polynomials $Z_i Z_j, ~ i \neq j$ are not contained in $\mathcal{I}_1, \mathcal{I}_2$.  Consequently, incorporating these degree-two equations offers the prospect of strengthening our relaxations without a significant additional computational expense.  We define truncated ideals $\mathcal{I}_1^+, \mathcal{I}_2^+$ corresponding to $\mathcal{S}^+_{\mathrm{iep}}$ in an identical fashion to $\mathcal{I}_1, \mathcal{I}_2$ by restricting the coefficients corresponding to the additional polynomials $Z_i Z_j, ~ i \neq j$ to be matrices of constant polynomials (as in the restriction of the coefficients $h_3^{(i)}$ of $f_3^{(i)}$), with the coefficients of the other polynomials $f_1, f_2^{(i)}, f_3^{(i)}, f_4^{(k)}$ being as in $\mathcal{I}_1, \mathcal{I}_2$.

The semidefinite relaxation $\mathcal{R}^+_1(\Lambda,\eig)$ obtained as a strong alternative to the system $-1 \in \mathcal{I}^+_1 + \Sigma$ is identical to $\mathcal{R}_1(\Lambda,\eig)$, i.e., the additional redundant constraints do not strengthen the relaxation.  However, the strong alternative to the system $-1 \in \mathcal{I}^+_2 + \Sigma$ leads to a convex outer approximation $\mathcal{R}^+_2(\Lambda,\eig)$ of $\mathcal{V}_\R(\mathcal{S}_{\mathrm{iep}})$ that is in general tighter than $\mathcal{R}_2(\Lambda,\eig)$; in addition to all the constraints that define $\mathcal{R}_2(\Lambda,\eig)$ in \eqref{EqSecondDegree}, the set $\mathcal{R}^+_2(\Lambda,\eig)$ consists of the additional constraints $\sum_{r=1}^n \trace{f_{s,r} \mathcal{W}_{i,j}(f_{t,r})} = 0, ~ i,j = 1,\dots,q, ~ i \neq j, ~ s,t=1,\dots,n$ on the variables $\mathcal{W}_{i,j}$.  Thus, a notable feature of the relaxation $\mathcal{R}^+_2(\Lambda,\eig)$ is that it is of the same size as $\mathcal{R}_2(\Lambda,\eig)$, despite providing a tighter convex outer approximation in general to $\mathcal{V}_\R(\mathcal{S}_{\mathrm{iep}})$.  We demonstrate the merits of this relaxation in the numerical experiments in this section.


\subsection{Experiments with Random Affine IEPs}\label{subsec:random}

We present an experiment on random problems instances in this subsection.
Specifically, we compare the relative power of the two relaxations described in Section~\ref{SectionRelaxations} in certifying infeasibility, or from a dual viewpoint, in approximating $\mathrm{conv}(\mathcal{V}_\R(\mathcal{S}_{\mathrm{iep}}))$.  To provide a visual illustration, we consider affine IEPs involving matrices in $\Sym^3$, with a desired spectrum of $\{-1,0,1\}$.  We begin by considering an affine space defined by $\ell=3$ random linear equations, i.e., $\mathcal{E} = \{X \in \Sym^3 ~|~ \trace{C_k X} = 0,~ C_k\in \Sym^3, ~ k=1,\dots,\ell \}$, where the $C_k$'s have random entries.  Given the spectrum (which fixes the trace) and the affine space $\mathcal{E}$, the solution set $\mathcal{V}_\R(\mathcal{S}_{\mathrm{iep}})$ is constrained to lie in an affine space of dimension at most two in $\Sym^3$.  We then set the entries $X_{11},X_{22}$ to fixed values in the range $[-1,1]$, and check whether there exists a matrix in $\Sym^3$ with these values for $X_{11},X_{22}$ that can be expressed as $\sum_i \lambda_i Z_i$ for $(Z_1,Z_2,Z_3) \in \mathcal{R}_1(\Lambda,\eig)$ and for $(Z_1,Z_2,Z_3) \in \mathcal{R}_2(\Lambda,\eig)$. Figures~\ref{RandAff1} and \ref{RandAff2} represent two different problem instances obtained by generating two affine spaces $\mathcal{E}$ as described above, and they illustrate graphically when the relaxations succeed or fail at certifying infeasibility.  Evidently, the relaxation $\mathcal{R}_2(\Lambda,\eig)$ is successful in certifying infeasibility over a larger range of values of $X_{11},X_{22}$ than the relaxation $\mathcal{R}_1(\Lambda,\eig)$, thus illustrating its increased power (at a greater computational expense).  From a dual perspective, we have in both cases that $\mathcal{R}_1(\Lambda,\eig) \supsetneq \mathcal{R}_2(\Lambda,\eig)$.  In particular, the feasibility regions corresponding to $\mathcal{R}_1(\Lambda,\eig)$ and $\mathcal{R}_2(\Lambda,\eig)$ in Figures~\ref{RandAff1} and \ref{RandAff2} represent the projections of these sets onto the $(X_{11},X_{22})$-plane of $\Sym^3$.  In each of the two settings, we maximized $1000$ random linear functionals over $\mathcal{R}_2(\Lambda,\eig)$ and in all cases obtained an element of $\mathcal{V}_\R(\mathcal{S}_{\mathrm{iep}})$.  Consequently, it appears at least based on numerical evidence that $\mathcal{R}_2(\Lambda,\eig) = \mathrm{conv}(\mathcal{V}_\R(\mathcal{S}_{\mathrm{iep}}))$ in both examples.  Figures~\ref{RandAff3} and \ref{RandAff4} give two examples based on the same setup as above, but with $\ell=2$ random linear equations defining the affine space $\mathcal{E}$.  Here the dimension of the solution set $\mathcal{V}_\R(\mathcal{S}_{\mathrm{iep}})$ is at most three, and the feasibility regions corresponding to $\mathcal{R}_1(\Lambda,\eig)$ and $\mathcal{R}_2(\Lambda,\eig)$ in Figures~\ref{RandAff3} and \ref{RandAff4} represent two-dimensional projections (onto the $(X_{11},X_{22})$-plane of $\Sym^3$) of these sets.  As with the previous examples, we maximized $1000$ random linear functionals over $\mathcal{R}_2(\Lambda,\eig)$ and in all cases obtained an element of $\mathcal{V}_\R(\mathcal{S}_{\mathrm{iep}})$.  Consequently, it again appears that $\mathcal{R}_2(\Lambda,\eig) = \mathrm{conv}(\mathcal{V}_\R(\mathcal{S}_{\mathrm{iep}}))$.

\begin{figure}[hbt]
\centering
\subcaptionbox{\label{RandAff1}}{\includegraphics[width=0.24\textwidth]{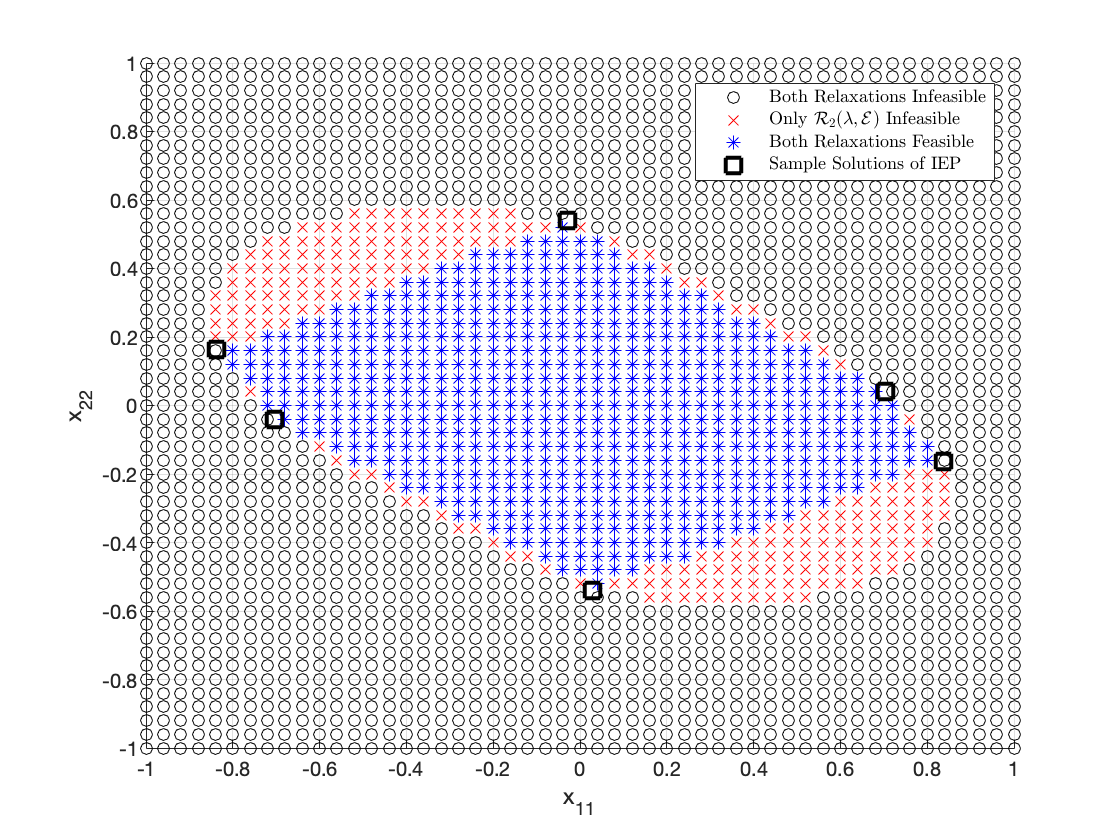}} 
\subcaptionbox{\label{RandAff2}}{\includegraphics[width=0.24\textwidth]{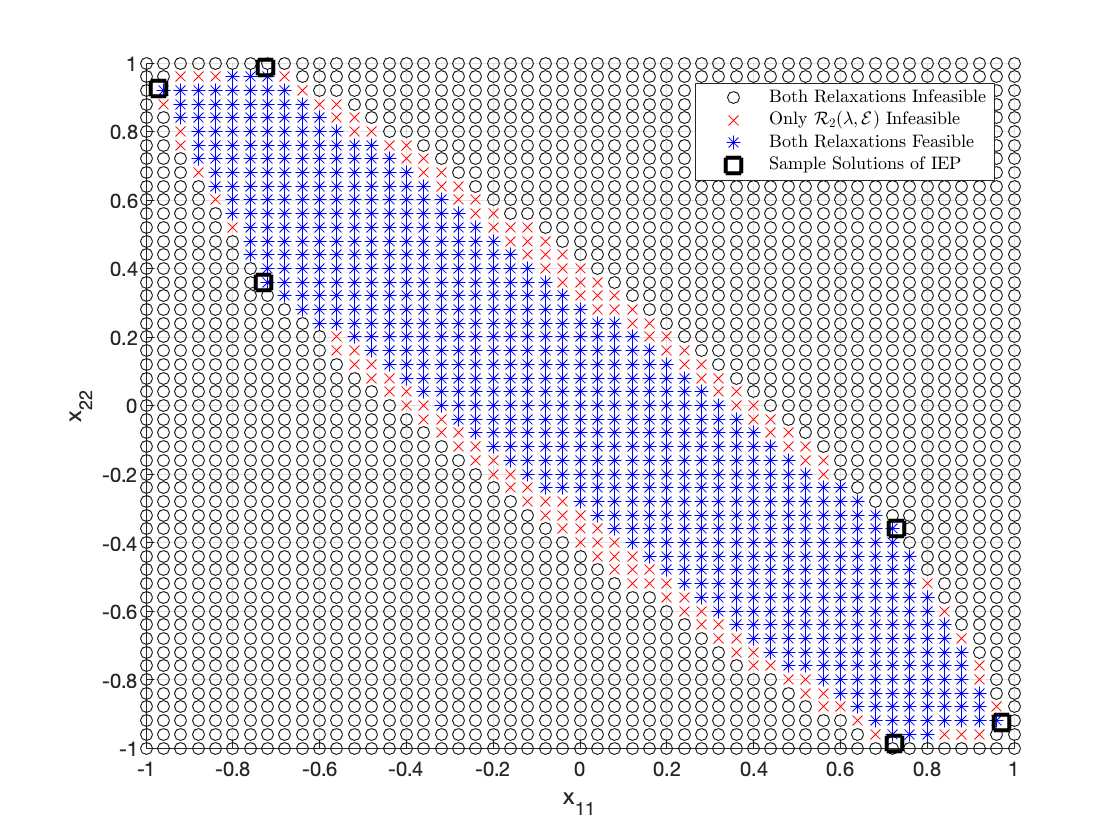}} 
\subcaptionbox{\label{RandAff3}}{\includegraphics[width=0.24\textwidth]{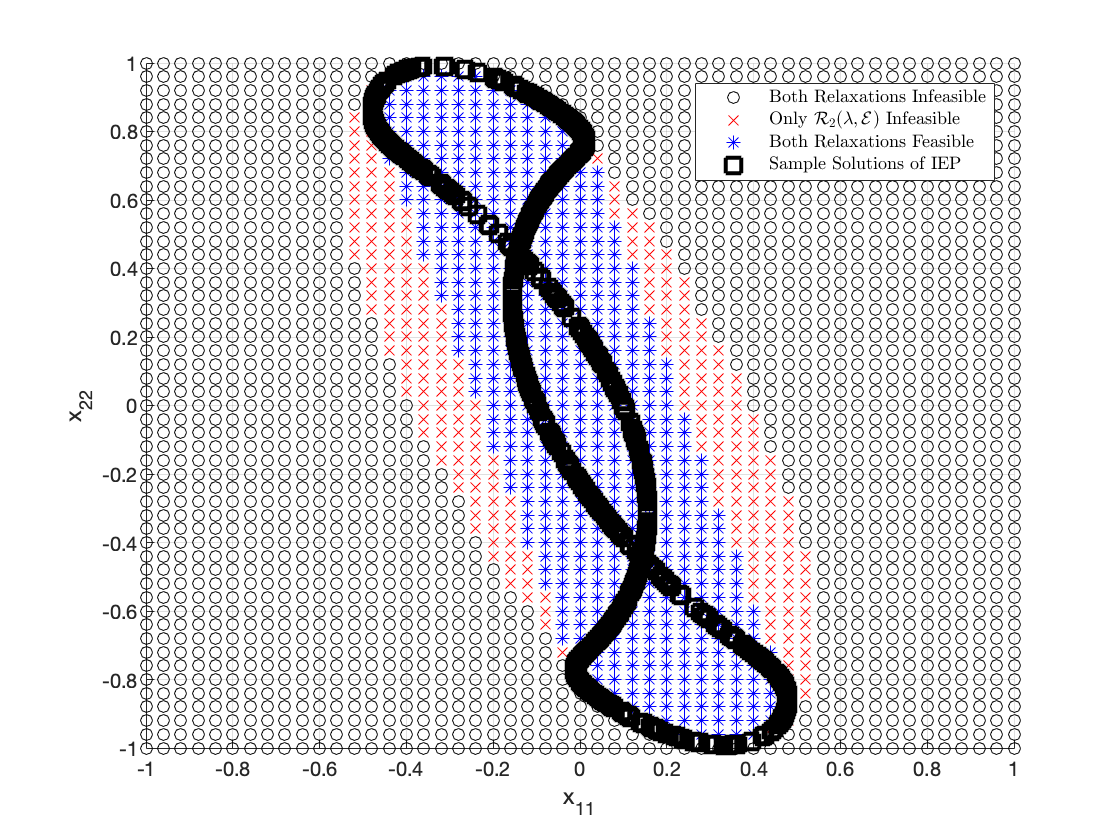}} 
\subcaptionbox{\label{RandAff4}}{\includegraphics[width=0.24\textwidth]{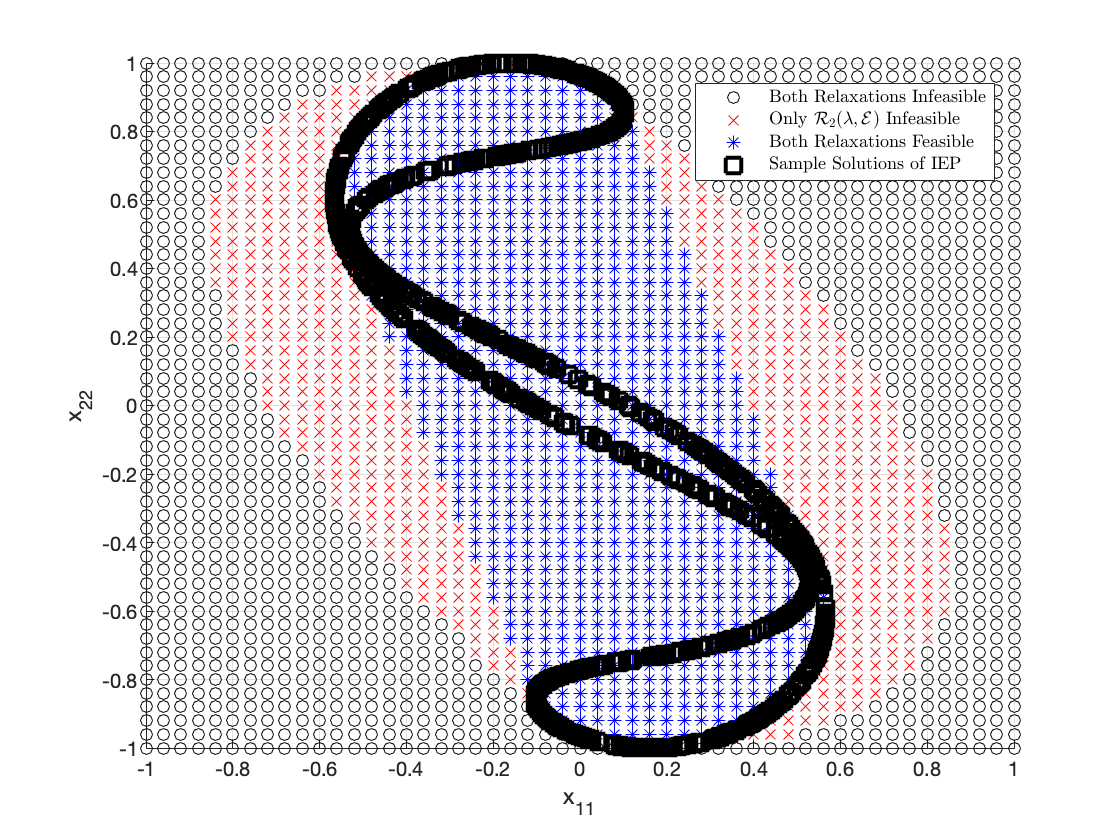}} 
\caption{Comparison of feasible/infeasible regions of $\mathcal{R}_1(\Lambda,\eig)$ and $\mathcal{R}_2(\Lambda,\eig)$ for four random problem instances as described in Section~\ref{subsec:random}. The points marked with black circles, red crosses, and blue stars correspond, respectively, to settings in which $\mathcal{R}_1(\Lambda,\eig)$ and $\mathcal{R}_2(\Lambda,\eig)$ are both infeasible; $\mathcal{R}_1(\Lambda,\eig)$ is feasible and $\mathcal{R}_2(\Lambda,\eig)$ is infeasible; and both $\mathcal{R}_1(\Lambda,\eig)$ and $\mathcal{R}_2(\Lambda,\eig)$ are feasible. Thick black squares represent $(X_{11},X_{22})$ values of solutions to the affine IEP. }
\label{FigureExpRandAff1}
\end{figure}

\subsection{Discrete Inverse Sturm-Liouville Problem}

Next, we demonstrate an application of our framework to certify infeasibility of, or produce a solution to, the extensively studied discrete inverse Sturm-Liouville problem \cite{hald1977discrete}.  This problem arises as a discretization of a continuous differential boundary problem of the form $-u''(x) + p(x)u(x) = \lambda u(x), ~ u(0) = u(\pi) =0$.  Here, $u(x)$ and $p(x)$ are functions, and $\lambda$ is a parameter that is an eigenvalue of the system. A particular discretization of this differential equation gives rise to the linear system $\left(\frac{(n+1)^2}{\pi^2} J + D \right)u = \lambda u$, where $J$ is a Jacobian matrix with diagonal entries equal to $2$ and the nonzero off-diagonal entries equal to $-1$ \cite{hald1977discrete}.  Hence, given a collection $\lambda_1, \dots, \lambda_n \in \mathbb{R}$, one wishes to identify a diagonal matrix $D$ so that this linear system has a solution for each setting $\lambda = \lambda_i$, i.e., $\lambda_1,\dots,\lambda_n$ are eigenvalues of the matrix $\frac{(n+1)^2}{\pi^2} J + D$.  This is clearly an instance of an affine IEP.

We consider two different instantiations of the problem with $n=5$.  First, we consider the set of eigenvalues $\{1,2,3,4,5\}$.  In this instance, there exists a decomposition $-1 \in \mathcal{I}_1 + \Sigma$ which certifies that the discrete inverse Sturm-Liouville problem is infeasible with the given spectrum.  Next, we consider eigenvalues in the set $\{1,4,9,16,25\}$.  In this case, the discrete inverse Sturm-Liouville problem turns out to be feasible.  Specifically, we attempt to produce a solution to the inverse discrete Sturm-Lioville problem by maximizing $100$ random linear functionals over the convex sets $\mathcal{R}_1(\Lambda,\eig)$, $\mathcal{R}_2(\Lambda,\eig)$, and $\mathcal{R}_2^+(\Lambda,\eig)$; our approach succeeds $14$ out of $100$ times over $\mathcal{R}_1(\Lambda,\eig)$, $26$ out of $100$ times over $\mathcal{R}_2(\Lambda,\eig)$, and $55$ out of $100$ times over $\mathcal{R}_2^+(\Lambda,\eig)$.  These results suggest that our semidefinite relaxations may offer a useful solution framework across the range of applications in which the discrete inverse Sturm-Liouville problem arises.

\subsection{Induced Subgraph Isomorphism}

We present next the utility of our framework in the context of a problem in combinatorial optimization, namely the induced subgraph isomorphism problem.  Here we are given two undirected, unweighted graphs $\mathcal{G}$ and $\mathcal{G}'$ on $n$ and $n'$ vertices, respectively, with $n' < n$.  The problem is to determine whether $\mathcal{G}'$ is an induced subgraph of $\mathcal{G}$.  This problem is NP-complete in general and has received considerable attention.

Suppose $\mathcal{G}'$ is an induced subgraph of $\mathcal{G}$.  Letting $A \in \Sym^{n}$ and $A' \in \Sym^{n'}$ be adjacency matrices representing the graphs $\mathcal{G}$ and $\mathcal{G}'$, respectively, such that $A'$ is equal to a principal submatrix of $A$, there must exist a matrix $M \in \Sym^n$ that satisfies the following conditions:
\begin{equation} \label{EqInducedSubgraph}
\begin{aligned}
\trace{A M} = \sum_{i,j = 1}^{n'} \left(A'\right)_{i,j}; ~~~ \left(M\right)_{i,j} = 0 \text{ if } \left(A \right)_{i,j} = 0, ~ i,j = 1,\dots,n.
\end{aligned}
\end{equation}
This consequence follows because we may choose $M$ to be equal to $A'$ on the $n' \times n'$ principal submatrix corresponding to corresponding to $\mathcal{G}'$ and zero elsewhere.  Thus, a sufficient condition to certify that $\mathcal{G}'$ is not an induced subgraph of $\mathcal{G}$ is to certifying the infeasibility of an affine IEP in which the spectrum is equal to that of $A'$ along with an eigenvalue of zero with multiplicity $n - n'$ and the affine space is given by \eqref{EqInducedSubgraph}.

With this approach, we prove that the octahedral graph with $6$ nodes and $12$ edges (shown in Figure \ref{FigureOctahedral}) is not contained as an induced subgraph in either of the larger graphs shown in Figure~\ref{FigureDeg1Suc} (on $20$ nodes with $44$ edges) and Figure~\ref{FigureDeg2Suc} (on $15$ nodes with $38$ edges).  Both of these larger graphs are randomly generated Erd\"os-Renyi random graphs where any two vertices are independently and randomly connected with probability $0.2$ for Figure \ref{FigureDeg1Suc} and $0.4$ for Figure \ref{FigureDeg2Suc}. For the first graph, there exists a decomposition $-1 \in \mathcal{I}_1 + \Sigma$, thus certifying that the octahedral graph is not an induced subgraph.  For the second graph, there is no infeasibility certificate of the form $-1 \in \mathcal{I}_1 + \Sigma$ but there is one of the form $-1 \in \mathcal{I}_2^+ + \Sigma$, thus providing a certificate that the octahedral graph is again not an induced subgraph.

\begin{figure}[hbt]
\centering
\subcaptionbox{\label{FigureOctahedral}}{\includegraphics[width=0.16\textwidth]{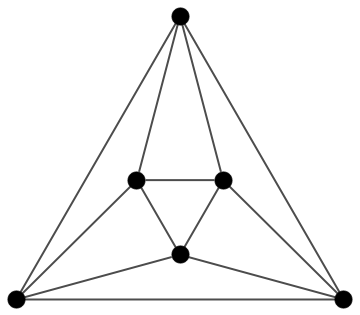}}  \hspace{0.35in}
\subcaptionbox{\label{FigureDeg1Suc}} {\includegraphics[width=0.2\textwidth]{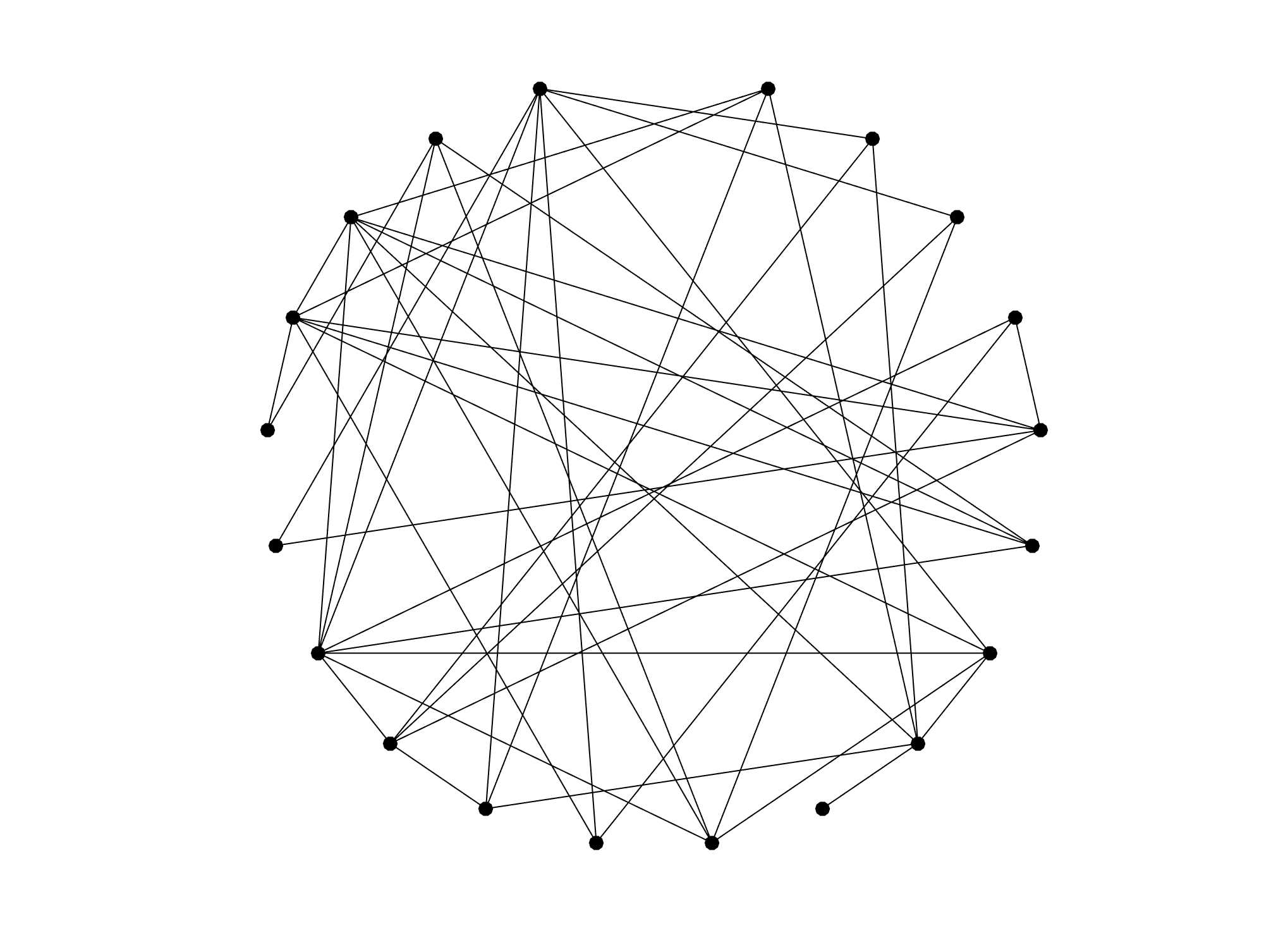}}\hspace{0.3in}
\subcaptionbox{\label{FigureDeg2Suc}} {\includegraphics[width=0.2\textwidth]{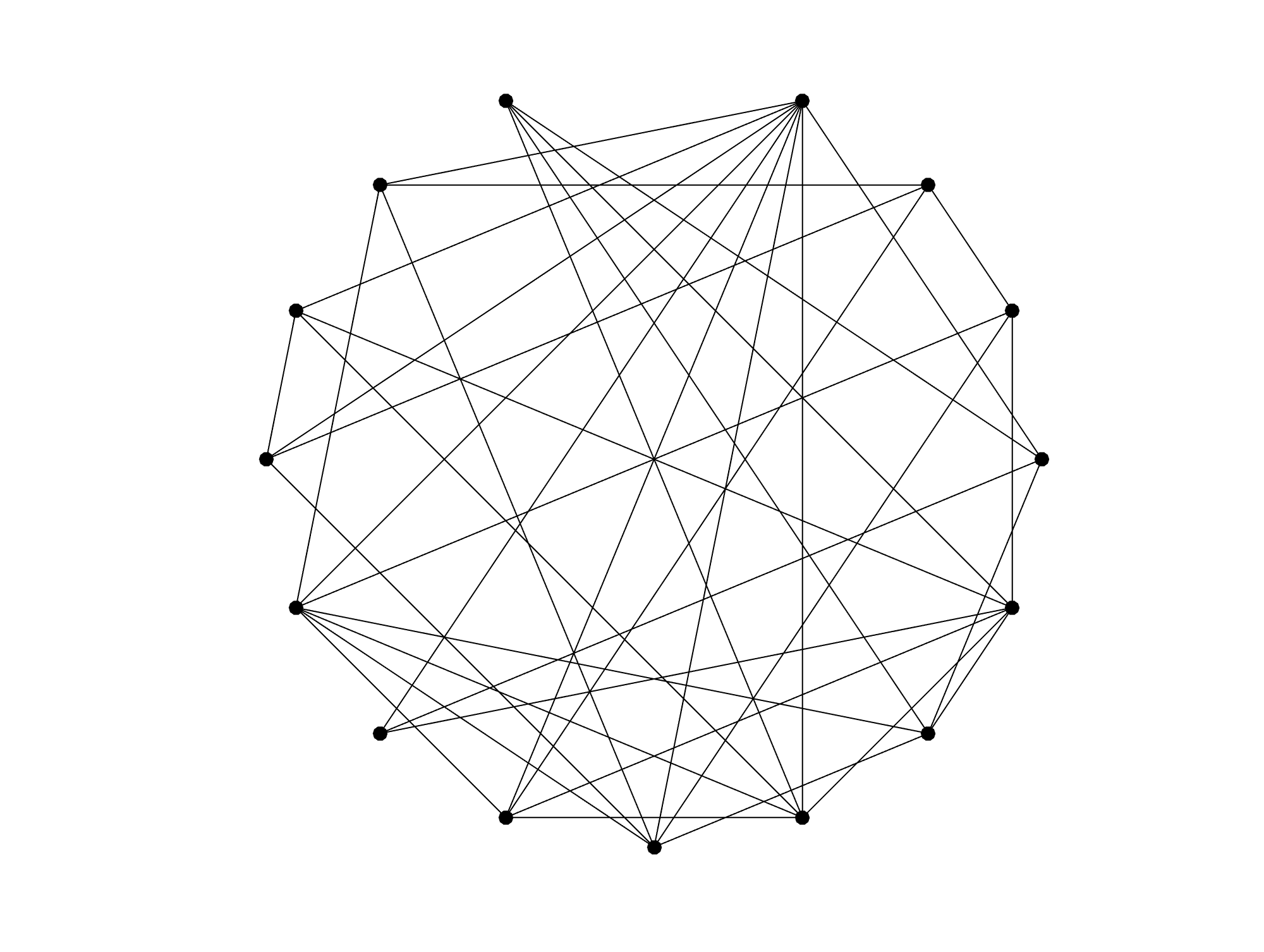}}
\caption{From left to right:  the octahedral graph, an Erd\"os-Renyi random graph on 20 nodes with p = 0.2, an Erd\"os-Renyi random graph on 15 nodes with p = 0.4.  Our first convex relaxation certifies that the octahedral graph is not an induced subgraph of the graph shown in Figure \ref{FigureDeg1Suc}. A tighter convex relaxation proves the same result for the graph shown in Figure \ref{FigureDeg2Suc}.}\label{FigureInducedSG}
\end{figure}

\subsection{Constructing a Real Symmetric Toeplitz Matrix with Desired Spectrum}
Finally, we describe how our framework can be utilized for constructing real symmetric Toeplitz matrices with a desired spectrum.  As Toeplitz matrices form a subspace, this question is an instance of an affine IEP.  Landau showed that there exists a Toeplitz matrix with a desired spectrum, but his proof was non-constructive \cite{landau1994inverse}, and numerically constructing such matrices continues to remain a challenge.

In our first experiment, we set $n=5$ and consider the problem of constructing a symmetric Toeplitz matrix with eigenvalues $\{1,2,3,4,5\}$.  We maximize random linear functionals over the sets $\mathcal{R}_{1}(\Lambda,\eig)$, $\mathcal{R}_{2}(\Lambda,\eig)$ and $\mathcal{R}^+_{2}(\Lambda,\eig)$, and we succeed at identifying a Toeplitz matrix with the desired spectrum $4$ out of $100$ times with $\mathcal{R}_{1}(\Lambda,\eig)$, $12$ out of $100$ times with $\mathcal{R}_{2}(\Lambda,\eig)$, and $41$ out of $100$ times with $\mathcal{R}_{2}^+(\Lambda,\eig)$.  In our second experiment we set $n=8$ and we seek a Toeplitz matrix with eigenvalues $-1$ (with multiplicity four) and $1$ (with multiplicity four).  With the same approach as before of maximizing random linear functionals, we identify a Toeplitz matrix with the desired spectrum $17$ out of $100$ times with $\mathcal{R}_{1}(\Lambda,\eig)$, and $84$ out of $100$ times with both $\mathcal{R}_{2}(\Lambda,\eig)$ and $\mathcal{R}_{2}^+(\Lambda,\eig)$.  In summary, our framework provides a numerical counterpart to Landau's non-constructive existence result.


\section{Conclusions} \label{sec:conc}
In this short note we describe a new framework for the affine IEP by first formulating it as a system of polynomial equations and then employing techniques from the polynomial optimization literature to obtain several semidefinite relaxations.  These relaxations offer increasingly tighter approximations at the expense of solving larger semidefinite programming problems.  We compare these relaxations both in random problem instances as well as in stylized examples in the context of various applications.

A number of future directions arise from our work.  First, it is of interest to identify conditions on a spectrum $\Lambda$ and an affine space $\eig$ so that a particular relaxation such as $\mathcal{R}_1(\Lambda,\eig)$ is tight, i.e., $\mathcal{R}_1(\Lambda,\eig) = \mathrm{conv}(\mathcal{V}_\R(\mathcal{S}_{\mathrm{iep}}))$.  These would correspond to families of instances of the affine IEP that are exactly solved by a tractable semidefinite program.  A related second question is that in considering a sequence of truncated ideals that are subsets of $\langle \mathcal{S}_{\mathrm{iep}} \rangle$, does there exist a truncated ideal $\mathcal{I}$ of bounded (but possibly large) degree coefficients for which the alternative of the system $-1 \in \mathcal{I} + \Sigma$ is exactly equal to $\mathrm{conv}(\mathcal{V}_\R(\mathcal{S}_{\mathrm{iep}}))$?  Such a property is sometimes called \emph{finite convergence} in the polynomial optimization literature; it has been shown to be true if $\mathcal{V}_\R(\mathcal{S}_{\mathrm{iep}})$ is finite, but more generally, depends on the particular structure of the problem at hand.  If this finite convergence property is true for the affine IEP setting considered in this note, then our heuristic for obtaining a solution to the system $\mathcal{S}_{\mathrm{iep}}$ (if it is feasible) based on maximizing random linear functionals over convex outer approximations of $\mathcal{V}_\R(\mathcal{S}_{\mathrm{iep}})$ is guaranteed to succeed after finitely many steps.



\bibliographystyle{plain}
\bibliography{IEPbib}

\end{document}